\newcommand{\CM}{Cohen-Macaulay}
\newcommand{\wrt}{with respect to}
\newcommand{\n}{\mathfrak{n} }
\newcommand{\m}{\mathfrak{m} }
\newcommand{\M}{\mathfrak{M} }
\newcommand{\Bf}{\mathbf{f} }
\newcommand{\Bg}{\mathbf{g} }
\newcommand{\Z}{\mathbb{Z} }
\newcommand{\rt}{\rightarrow}
\newcommand{\ov}{\overline}
\newcommand{\wt}{\widetilde }
\newcommand{\ord}{\operatorname{ord}}
\newcommand{\grade}{\operatorname{grade}}
\newcommand{\depth}{\operatorname{depth}}
\newcommand{\projdim}{\operatorname{projdim}}
\newcommand{\ann}{\operatorname{ann}}
\newcommand{\Tor}{\operatorname{Tor}}
\newcommand{\vpd}{\operatorname{vpd}}
\newcommand{\embdim}{\operatorname{embdim}}
\newcommand{\codim}{\operatorname{codim}}
\newcommand{\cx}{\operatorname{cx}}
\newcommand{\Ext}{\operatorname{Ext}}
\newcommand{\Syz}{\operatorname{Syz}}
\theoremstyle{plain}
\newtheorem{thm}{Theorem}
\newtheorem{theorem}{Theorem}[section]
\newtheorem{lemma}[theorem]{Lemma}
\newtheorem{proposition}[theorem]{Proposition}
\theoremstyle{definition}
\newtheorem{definition}[theorem]{Definition}
\newtheorem{remark}[theorem]{Remark}
\newtheorem{example}[theorem]{Example}
\theoremstyle{remark}
\begin{document}

\title[Hilbert function of MCM modules]{ The Hilbert function of a maximal Cohen-Macaulay module\\ Part II}
\author{Tony~J.~Puthenpurakal}
\date{\today}
\address{Department of Mathematics, IIT Bombay, Powai, Mumbai 400 076}

\email{tputhen@math.iitb.ac.in}

\subjclass{Primary 13A30; Secondary 13D40, 13D07}

 \begin{abstract}Let $(A,\m)$ be a strict complete intersection of positive dimension and let $M$ be a maximal \CM \ $A$-module with  bounded betti-numbers.  We prove that the Hilbert function of $M$ is non-decreasing. We also prove an analogous statement for complete intersections of codimension two.
\end{abstract}
 \maketitle
\section*{introduction}
Let $(A,\m)$ be a $d$-dimensional Noetherian ring  with residue field $k$ and let $M$ be a finitely generated $A$-module. Let $\mu(M)$ denote minimal number of generators of $M$ and let $\ell(M)$ denote its length. Let $\codim(A) = \mu(\m) - d$ denote the codimension of $A$.

Let $G(A) = \bigoplus_{n\geq 0}\m^n/\m^{n+1}$ be the associated graded ring of $A$ (\wrt \ $\m$) and let $G(M) = \bigoplus_{\n\geq 0}\m^nM/\m^{n+1}M$ be the associated graded module of $M$ considered as a $G(A)$-module. The ring $G(A)$ has a  unique graded maximal ideal
$\M_G =  \bigoplus_{n\geq 1}\m^n/\m^{n+1} $.
 Set $ \depth G(M) =  \grade(\M_G,G(M))$.

The Hilbert function of $M$ (\wrt \ $\m$) is the function
\[
H(M,n) = \ell \left( \frac{\m^nM}{\m^{n+1}M} \right) \quad \text{for all} \ n\geq 0.
\]
It is clear that if $\depth G(M) > 0$ then the Hilbert function of $M$ is \emph{non} decreasing, see Proposition 3.2 of \cite{Pu2}.
If $A$ is regular local then all maximal \CM \ (= MCM ) modules are free. Thus every MCM module of positive dimension over a regular local ring has a non-decreasing Hilbert function. The next case is that
of a hypersurface ring i.e., the completion $\widehat{A} = Q/(f)$ where $(Q,\n)$ is regular local and $f \in \n^2$.
In part 1 of this paper we proved that if $A$ is a hypersurface ring of positive dimension and if $M$ is a MCM $A$-module then the Hilbert function of $M$ is non-decreasing (see Theorem 1,\cite{Pu2}). See example 3.3 of part 1 of the paper for an example of a MCM module $M$ over the hypersurface ring
$k[[x,y]]/(y^3)$ with $\depth G(M) = 0$.

In the ring case Elias \cite[2.3]{Elias},  proved that the Hilbert function of a  one dimensional Cohen-Macaulay ring is non-decreasing if embedding dimension is three. The first example of a  one dimensional \CM \ ring $A$ with not monotone increasing Hilbert function was given by Herzog and Waldi; \cite[3d]{HW}. Later Orecchia, \cite[3.10]{Ore}, proved that for all $b \geq 5$ there exists a reduced one-dimensional \CM \ local ring of embedding dimension $b$  whose Hilbert function is not monotone increasing.
Finally in \cite{GR}  we can find similar example with embedding dimension four. A long standing conjecture in theory of Hilbert functions is that
the Hilbert function of a one dimensional  complete intersection (of positive dimension) is non-decreasing. Rossi conjectures that a similar result holds for
Gorenstein rings.

In this paper we investigate Hilbert function of MCM modules over complete intersection rings; ie., rings $A$ with completion $\widehat{A} = Q/ (f_1,\ldots,f_c)$ where $(Q,\n)$ is regular local and $f_1,\ldots, f_c$ is a $Q$-regular sequence and $f_i \in  \n^2$ for each $i$.   A direct generalization of the above result is false.
See \ref{ex1}  for an example of strict complete intersection $A$ with a MCM  module $M$ such that the
Hilbert function of $M$ is not monotone. Recall a local ring $(A, \m) $ is said to be a strict complete intersection if $A$ is a complete intersection and
$G(A)$ is also a complete intersection.

To describe our result we need to recall the notion of complexity of a module.
 This notion was introduced by Avramov in \cite{LLAV}.
 Let $\beta_i^A(M) = \ell( \Tor^A_i(M,k) )$ be the $i^{th}$ Betti number of $M$ over $A$. The complexity of $M$ over $A$ is defined by
\[
\cx_A M = \inf\left \lbrace b \in \mathbb{N}  \left\vert \right.   \varlimsup_{n \to \infty} \frac{\beta^A_n(M)}{n^{b-1}}  < \infty \right \rbrace.
\]
If $A$ is a local complete intersection of $\codim c$ then $\cx_A M \leq c$. Furthermore all values between $0$ and $c$ occur.
Note that $\cx_A M \leq 1$ if and only if $M$ has bounded Betti numbers.

 Let $A$ be a hypersurface ring and $M$  a non-free MCM $A$-module. Then
 $$\Syz^{A}_{n+2}(M) \cong \Syz^A_{n}(M)  \quad \text{for all} n \geq 1. $$
  This result has been generalized to complete intersection of arbitrary codimension by Eisenbud, i.e., if $A$ is a local complete intersection and if $M$ is a MCM
$A$-module with bounded Betti-numbers then $\Syz^{A}_{n+2}(M) \cong \Syz^A_{n}(M)$ for all $n \geq 1$; see \cite{Eis}. A hypersurface ring is also a strict complete intersection. Our generalization of Theorem 1 in \cite{Pu2} is the following:
\begin{thm}\label{strict}
Let $(A,\m)$ be a  strict complete intersection  ring of positive dimension.
Let  $M$ be a maximal \CM \ $A$-module with bounded Betti numbers. Then the Hilbert function of $M$ is non-decreasing.
\end{thm}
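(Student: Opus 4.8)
The plan is to reduce the statement to a situation governed by the two results already available: the sufficiency of positive depth of $G(M)$ (Proposition 3.2 of \cite{Pu2}) and Eisenbud's periodicity for modules of bounded Betti numbers (\cite{Eis}). First I would pass to the completion $\wh{A}$, which alters neither the Hilbert function of $M$ nor the hypotheses, since completion preserves the strict complete intersection property, maximal Cohen-Macaulayness, and all Betti numbers; thus I may assume $A = Q/(f_1,\dots,f_c)$ with $(Q,\n)$ regular local. The strict hypothesis says precisely that the initial forms $f_1^*,\dots,f_c^*$ form a $G(Q)$-regular sequence, so that $G(A) = G(Q)/(f_1^*,\dots,f_c^*)$ is itself a graded complete intersection. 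The decisive reformulation is that $H(M,n) = \dim_k G(M)_n$, so the Hilbert function of $M$ is exactly the Hilbert function of the graded $G(A)$-module $G(M)$; if $\depth G(M) > 0$ we are done, and the entire content lies in the case $\depth G(M) = 0$.

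Second, I would exploit $\cx_A M \le 1$. By Eisenbud's theorem $\Syz^A_{n+2}(M) \cong \Syz^A_n(M)$ for $n\ge 1$, so for a sufficiently high syzygy $N = \Syz^A_n(M)$ the minimal resolution is two-periodic, fitting into short exact sequences $0 \to N \to A^{b} \to N' \to 0$ and $0 \to N' \to A^{b} \to N \to 0$ with $N' = \Syz^A_1(N)$, both terms again being maximal Cohen-Macaulay with bounded Betti numbers. The strict hypothesis enters here: because $G(A)$ is a complete intersection and the $f_i^*$ form a regular sequence, the cohomology operators can be realized at the level of $G(A)$, and I expect $G(\Syz^A_n M)$ to be, for $n\gg 0$, a high syzygy over $G(A)$ of bounded Betti numbers. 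This is what transports the period-two structure down to the graded module $G(M)$ over the graded complete intersection $G(A)$.

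Third, I would run the $L$-module machinery on this periodic package. Writing $L(M) = \bigoplus_{n\ge 0} M/\m^{n+1}M$, the exact sequence $0 \to G(M) \to L(M) \to L(M)(-1) \to 0$ converts the second difference of the Hilbert function of $M$ into information about the graded local cohomology of $L(M)$ with respect to the irrelevant ideal of $\R(A)$. Applying $L(-)$ to the two periodic short exact sequences above, and using that over a strict complete intersection the matrix-factorization-type relations are filtered (by the regular-sequence property of the $f_i^*$), I would identify the obstruction to monotonicity with a fixed graded component of $H^1$; the period-two isomorphism then forces this obstruction to be isomorphic to its own shift, hence periodic and bounded, and therefore to vanish in the relevant range, with the base case reducing to the hypersurface computation of Theorem 1 of \cite{Pu2}.

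The main obstacle is the compatibility of Eisenbud's periodicity with the $\m$-adic filtration. The isomorphism $\Syz^A_{n+2}(M) \cong \Syz^A_n(M)$ is purely homological and a priori respects neither the filtration nor the grading on $G$, so the genuine work is to promote it to a filtered, equivalently $G(A)$-graded, isomorphism. This is exactly the step that consumes the word ``strict'': the regular sequence $f_1^*,\dots,f_c^*$ lets one build the cohomology operators compatibly with the associated graded ring, and without it (a mere complete intersection) the argument, and in fact the conclusion itself, fails, as \ref{ex1} shows. Controlling the Artin-Rees error terms that arise when $L(-)$ is applied to the non-split periodic sequences is the remaining technical heart of the proof.
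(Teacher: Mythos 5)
Your proposal stalls exactly where you say it does, and that unresolved step is the whole theorem: you never supply a mechanism for promoting the homological isomorphism $\Syz^A_{n+2}(M)\cong \Syz^A_n(M)$ to one compatible with the $\m$-adic filtration, and nothing in the strictness hypothesis does this for you directly. The Eisenbud isomorphism lives at the level of complexes up to homotopy; the expectation that ``$G(\Syz^A_n M)$ is, for $n\gg 0$, a high syzygy over $G(A)$ of bounded Betti numbers'' is not justified and is not what the paper proves. The subsequent plan --- applying $L(-)$ to the periodic exact sequences and arguing that an $H^1$ obstruction is isomorphic to its own shift and hence vanishes --- is a sketch of an argument, not an argument; the ``Artin--Rees error terms'' you defer are precisely the content. (Also, your reading of Example \ref{ex1} is off: it exhibits an MCM module with non-monotone Hilbert function over a \emph{strict} complete intersection, showing that the bounded Betti number hypothesis cannot be dropped; it says nothing about dropping strictness.)

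The paper's route is different and avoids the filtration-compatibility problem entirely. Since $\cx_A M=1$, the module $\Ext^*_A(M,k)$ is a one-dimensional graded module over $S=k[t_1,\ldots,t_c]$. One chooses a linear parameter $\xi_1=\ov{\beta_1}t_1+\cdots+\ov{\beta_c}t_c$ with \emph{all} coefficients nonzero (Lemma \ref{choice}), and uses the base-change formula for Eisenbud operators (\ref{Basis-change}, \ref{inverse}) to replace $\Bf$ by a new minimal generating set $\Bg$ of the same ideal so that $g_1$ realizes $\xi_1$. Setting $P=Q/(g_2,\ldots,g_c)$, finiteness of $\Ext^*_A(M,k)$ over $k[\xi_1]$ gives $\projdim_P M=1$. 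The point of strictness, of the ordering $\ord(f_1)\geq\cdots\geq\ord(f_c)$, and of the non-vanishing of every $\ov{\beta_j}$ is to guarantee that the initial forms $g_2^*,\ldots,g_c^*$ still form a $G(Q)$-regular sequence, so that $G(P)$ is Cohen--Macaulay of dimension $\geq 2$; then Theorem \ref{chief} (Theorem 5 of part 1) applies to $M$ over $P$ and finishes the proof. In short, the paper trades your ``filter the periodicity'' problem for a carefully chosen embedded deformation $P$ with $\projdim_P M=1$ and $\depth G(P)\geq 2$ --- an idea (virtual projective dimension in the sense of Avramov) that is absent from your proposal.
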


There does exist complete intersection local ring $A$ of positive dimension and codimension two
with $G(A) $ having depth zero; for instance see \ref{ex2}.
By one of  our earlier result the Hilbert function of $A$ is non-decreasing. More generally we show
\begin{thm}\label{cd2}
Let $(A,\m)$ be a complete intersection local ring of positive dimension and codimension two.
Let  $M$ be a maximal \CM \ $A$-module with bounded Betti numbers. Then the Hilbert function of $M$ is non-decreasing.
\end{thm}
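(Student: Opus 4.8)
The plan is to reduce the codimension-two statement to the hypersurface case of Part I. I may pass to the completion throughout, since this affects neither the Hilbert function of $M$, nor the property of being maximal \CM, nor the Betti numbers; so assume $A=Q/(f_1,f_2)$ with $(Q,\n)$ regular local and $f_1,f_2\in\n^2$ a $Q$-regular sequence. As $M$ has bounded Betti numbers, $\cx_A M\le1$. If $\cx_A M=0$ then $\projdim_A M<\infty$, whence $M$ is free (being maximal \CM \ over the \CM \ ring $A$) and $H(M,n)=\mu(M)\,H(A,n)$ is non-decreasing by the statement for $A$ recalled above. Thus the remaining case is $\cx_A M=1$.

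Now I descend to a hypersurface. Because $\cx_A M=1>0$, the support variety of $M$ in the sense of Avramov and Buchweitz is one-dimensional, so I may pick an element $g_1$ of a minimal generating set of $(f_1,f_2)$ lying along this variety, for which $M$ has \emph{infinite} projective dimension over the hypersurface ring $A'=Q/(g_1)$; completing $g_1$ to a generating pair $g_1,g_2$ with $(g_1,g_2)=(f_1,f_2)$ changes neither $A$ nor its $\m$-adic filtration, so $H(M,-)$ is unaffected. Here $\dim A'=\dim A+1\ge2$ and $\depth_{A'}M=\depth_A M=\dim A=\dim A'-1$, so $M$ is a \CM \ $A'$-module of codimension one. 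Consequently, in a minimal presentation
\[
0\longrightarrow W\longrightarrow (A')^{b_0}\longrightarrow M\longrightarrow 0,\qquad b_0=\mu(M),
\]
the syzygy $W:=\Syz^{A'}_1(M)$ has $\depth_{A'}W=\dim A'$, i.e. $W$ is a \emph{non-free} maximal \CM \ module over the positive-dimensional hypersurface $A'$. Theorem 1 of \cite{Pu2} therefore applies: $H(W,-)$ is non-decreasing, as is the Hilbert function of $V:=\Syz^{A'}_2(M)$ and of every higher $A'$-syzygy. Moreover $(W,V)$ is a two-periodic (matrix factorization) pair, the hypersurface incarnation of Eisenbud's periodicity $\Syz^A_{n+2}(M)\cong\Syz^A_n(M)$.

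The remaining and, I expect, hardest step is to transfer monotonicity from $W$ back to $M$ across the displayed sequence. This is exactly where the bounded Betti hypothesis must be used: the passage to a maximal \CM \ syzygy $W$ over a hypersurface is available for an arbitrary maximal \CM \ module, yet the conclusion is false without bounded Betti by \ref{ex1}, so monotonicity of $H(W,-)$ cannot by itself suffice. The difficulty is that the associated-graded sequence $0\to G(W)\to G((A')^{b_0})\to G(M)\to0$ need not be exact; the defect is measured by the Artin--Rees intersections $W\cap(\m')^{n}(A')^{b_0}$, and it is precisely this defect that can spoil monotonicity in the unbounded case. My strategy is to feed the periodicity back in: the two short exact sequences arising from $0\to V\to(A')^{b_1}\to W\to0$ and from the presentation of $M$ have free middle terms and are linked by the matrix factorization, and combining them should yield a single functional equation for the Hilbert series of $M$ whose inputs $H(W,-)$ and $H(V,-)$ are non-decreasing by Part I. Converting this homological two-periodicity into a clean inequality on the $\m$-adic filtration of $M$ — and showing that complexity exactly one (rather than two) is what makes the bookkeeping close up with non-negative first differences — is the crux on which the whole argument turns.
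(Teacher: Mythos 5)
Your reduction to a hypersurface is the right instinct, but you have picked the wrong hypersurface, and the step you yourself flag as the crux is a genuine gap: nothing in your argument transfers monotonicity from $W=\Syz^{A'}_1(M)$ back to $M$. As you note, the sequence $0\to G(W)\to G((A')^{b_0})\to G(M)\to 0$ of associated graded modules need not be exact, so knowing that $H(W,-)$ and $H(V,-)$ are non-decreasing gives no control on the first differences of $H(M,-)$. The ``functional equation for the Hilbert series'' you hope to extract from the matrix factorization does not exist at the level of the $\m$-adic filtration: the periodicity $\Syz^{A'}_{n+2}(M)\cong \Syz^{A'}_n(M)$ is an isomorphism of modules, not of filtered modules compatible with the embeddings into the free covers, so the Artin--Rees defects have no reason to cancel. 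In short, the proposal stops exactly where the proof has to begin.

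The paper's route goes through the \emph{other} intermediate hypersurface, namely one over which $M$ has \emph{finite} projective dimension. Since $M$ is maximal \CM \ with $\cx_A M\le 1$, Avramov's formula gives $\vpd_A M=\depth A-\depth M+\cx_A M=\cx_A M\le 1$; in the nontrivial case there is an \emph{embedded} deformation $(Q_0,\n_0)$ of $A$ with $\projdim_{Q_0}M=1$. Auslander--Buchsbaum then forces $\dim Q_0=d+1\ge 2$, and $\embdim Q_0=\embdim A=d+2$ shows $Q_0$ is a hypersurface, so $G(Q_0)$ is \CM \ of dimension $\ge 2$. Now Theorem \ref{chief} (Theorem 5 of Part 1: $\projdim_Q M\le 1$ and $\depth G(Q)\ge 2$ imply $H(M,-)$ is non-decreasing) applies verbatim; that theorem is precisely the tool that handles the non-exactness of the associated graded sequence which blocks your argument. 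If you want to salvage your approach, replace ``pick $g_1$ along the support variety so that $\projdim_{A'}M=\infty$'' by ``pick the direction given by a parameter for $\Ext^*_A(M,k)$ over $k[t_1,t_2]$, so that $\projdim_{A'}M<\infty$,'' and then invoke Theorem \ref{chief} rather than Theorem 1 of Part 1.
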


Here is an overview of the contents of the paper. In section one we discuss a few preliminary facts that we need. In this section we discuss Eisenbud operator's. The proof of Theorem 1 and 2 involves the notion of virtual projective dimension; see \cite{LLAV}. In section two we prove Theorem 2. In section three we give a proof of Theorem 1.

\section{Preliminaries}
In this paper all rings are Noetherian and all modules considered are assumed to be finitely generated
\begin{remark}
Let $x_1,... ,x_s$ be a sequence in  $\m$ and
set $J= (x_1,... ,x_s)$.
Set $B = A/J$,  $\n = \m/J$ and  $N = M/JM$. Notice
\[
G_{\m}(N) = G_{\n}(N) \quad \text{and} \quad \depth_{ G(A)} G(N) = \depth_{G(B)} G(N).
\]
\end{remark}

\s \textbf{Base change:}
\label{AtoA'}
 Let $\phi \colon (A,\m) \rt (A',\m')$ be a local ring homomorphism. Assume
   $A'$ is a faithfully flat $A$
algebra with $\m A' = \m'$. Set $\m' = \m A'$ and if
 $N$ is an $A$-module set $N' = N\otimes_A A'$.
 In these cases it can be seen that

\begin{enumerate}[\rm (1)]
\item
$\ell_A(N) = \ell_{A'}(N')$.
\item
 $H(M,n) = H(M',n)$ for all $n \geq 0$.
\item
$\dim M = \dim M'$ and  $\depth_A M = \depth_{A'} M'$.
\item
$\depth G(M) = \depth G(M')$.
\item
$A'$ is a (strict) local complete intersection if and only if $A$ is a (strict) local complete intersection.
\end{enumerate}

 \noindent The specific base changes we do are the following:

(i) $A' = A[X]_S$ where $S =  A[X]\setminus \m A[X]$.
The maximal ideal of $A'$ is $\n = \m A'$.
The residue
field of $A'$ is $K = k(X)$. We do this base change when the residue field $k$ is finite.

(ii) $A' = \widehat{A}$ the completion of $A$ with respect to the maximal ideal.

Thus we can assume that our ring $A$ is complete with infinite residue field.

\s To prove Theorem's \ref{strict} and \ref{cd2} we need the notion of cohomological operators over a complete intersection ring; see \cite{Gull} and
\cite{Eis}.
 Let $\mathbf{f} = f_1,\ldots,f_c$ be a regular sequence in a  local Noetherian ring $Q$. Set $I = (\mathbf{f})$ and
 $ A = Q/I$,
\s

The \emph{Eisenbud operators}, \cite{Eis}  are constructed as follows: \\
Let $\mathbb{F} \colon \cdots \rightarrow F_{i+2} \xrightarrow{\partial} F_{i+1} \xrightarrow{\partial} F_i \rightarrow \cdots$ be a complex of free
$A$-modules.

\emph{Step 1:} Choose a sequence of free $Q$-modules $\wt{F}_i$ and maps $\wt{\partial}$ between them:
\[
\wt{\mathbb{F}} \colon \cdots \rightarrow \wt{F}_{i+2} \xrightarrow{\wt{\partial}} \wt{F}_{i+1} \xrightarrow{\wt{\partial}} \wt{F}_i \rightarrow \cdots
\]
so that $\mathbb{F} = A\otimes\wt{\mathbb{F}}$

\emph{Step 2:} Since $\wt{\partial}^2 \equiv 0 \ \text{modulo} \ (\mathbf{f})$, we may write  $\wt{\partial}^2  = \sum_{j= 1}^{c} f_j\wt{t}_j$ where
$\wt{t_j} \colon \wt{F}_i \rightarrow \wt{F}_{i-2}$ are linear maps for every $i$.

 \emph{Step 3:}
Define, for $j = 1,\ldots,c$ the map $t_j = t_j(Q, \mathbf{f},\mathbb{F}) \colon \mathbb{F} \rightarrow \mathbb{F}(-2)$ by $t_j = A\otimes\wt{t}_j$.

\s
The operators $t_1,\ldots,t_c$ are called Eisenbud's operator's (associated to $\mathbf{f}$) .  It can be shown that
\begin{enumerate}
\item
$t_i$ are uniquely determined up to homotopy.
\item
$t_i, t_j$ commute up to homotopy.
\end{enumerate}

\s \label{Basis-change} Let $\mathbf{g} = g_1,\ldots, g_c$ be another regular sequence in $Q$ and assume $I = (\Bf) = (\Bg)$. The Eisenbud operator's associated to
$\Bg$ can be constructed by using the corresponding operators associated to $\Bf$. An explicit construction is as follows:
Let
$$f_i = \alpha_{i1}g_1 + \cdots + \alpha_{ic} g_c \quad \text{for} \ i = 1, \ldots, c. $$
Then we can choose
\begin{equation}\label{bchange-eqn}
t_i^\prime = \alpha_{1i}t_1 + \cdots + \alpha_{ci} t_c \quad \text{for} \ i = 1, \ldots, c.
\end{equation}
as Eisenbud operators for $g_1,\ldots,g_c$. In \cite{Eis} there is a mistake in indexing. So we give a full proof here.

For the proof
it is convenient to use matrices.
Here $\alpha = (\alpha_{ij})$ is a $c\times c$ invertible matrix with coefficients in $Q$. If $ \phi = (\phi_{ij})$ be a $m \times n$ matrix with coefficients in $Q$ then we set $\phi^{tr} = (\phi_{ji})$ to be the transpose of $\phi$. Set $[\Bf]$ and $[\Bg]$ to be the column vectors
$(f_1,\ldots,f_c)^{tr}$ and $(g_1,\ldots,g_c)^{tr}$ respectively. Thus in matrix terms we have
\[
[\Bf ] = \alpha\cdot [\Bg].
\]
Let $t_1, \ldots, t_c$ be the operators associated to $\Bf$ and let $\wt{t_1}, \ldots, \wt{t_c}$ be as above. By hypothesis we have
$\sum_{j=1}^{c} f_j \wt{t_j} = \wt{\partial^2}$. Set $[\mathbf{\wt{t}}] = ( \wt{t_1}, \ldots, \wt{t_c})^{tr}$. Define $[\mathbf{\wt{t}}^\prime] = ( \wt{t_1}^\prime, \ldots, \wt{t_c}^\prime)^{tr}$ by
\[
[\mathbf{\wt{t}^\prime}] = \alpha^{tr}\cdot [\mathbf{\wt{t}}].
\]
We prove
\begin{proposition}\label{bcL}
(with hypothesis as in \ref{Basis-change})
\[
\sum_{j=1}^{c} g_j\wt{t_j}^\prime = \wt{\partial^2}.
\]
\end{proposition}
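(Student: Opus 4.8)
The plan is to recast the desired equality as a single matrix identity and verify it by the associativity of matrix multiplication. First I would record that, in the notation of \ref{Basis-change}, the left-hand side $\sum_{j=1}^{c} g_j\wt{t_j}^\prime$ is precisely the product $[\Bg]^{tr}\cdot[\mathbf{\wt{t}^\prime}]$, since expanding the product of the row vector $[\Bg]^{tr} = (g_1,\ldots,g_c)$ against the column vector $[\mathbf{\wt{t}^\prime}]$ gives exactly $\sum_j g_j\wt{t_j}^\prime$. In the same way the hypothesis $\sum_{j=1}^{c} f_j\wt{t_j} = \wt{\partial^2}$ reads $[\Bf]^{tr}\cdot[\mathbf{\wt{t}}] = \wt{\partial^2}$.

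Next I would substitute the defining relation $[\mathbf{\wt{t}^\prime}] = \alpha^{tr}\cdot[\mathbf{\wt{t}}]$ and compute, using associativity together with the standard transpose rule $(\alpha\cdot[\Bg])^{tr} = [\Bg]^{tr}\cdot\alpha^{tr}$ and the hypothesis $[\Bf] = \alpha\cdot[\Bg]$, that
\[
[\Bg]^{tr}\cdot[\mathbf{\wt{t}^\prime}] = [\Bg]^{tr}\cdot\alpha^{tr}\cdot[\mathbf{\wt{t}}] = \left(\alpha\cdot[\Bg]\right)^{tr}\cdot[\mathbf{\wt{t}}] = [\Bf]^{tr}\cdot[\mathbf{\wt{t}}] = \wt{\partial^2}.
\]
Reading the two ends of this chain back as sums, this is exactly the asserted identity $\sum_{j=1}^{c} g_j\wt{t_j}^\prime = \wt{\partial^2}$.

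The one point that genuinely needs care—and the reason the transpose $\alpha^{tr}$ appears (equivalently, why $\wt{t_i}^\prime$ is built from the column $\alpha_{1i},\ldots,\alpha_{ci}$ rather than the row $\alpha_{i1},\ldots,\alpha_{ic}$)—is that the ``vectors'' $[\mathbf{\wt{t}}]$ and $[\mathbf{\wt{t}^\prime}]$ have \emph{operator} entries, whereas $\alpha$, $[\Bf]$, $[\Bg]$ have entries in $Q$. I would therefore note at the outset that each $\wt{t_j}\colon\wt{F}_i\rt\wt{F}_{i-2}$ is a $Q$-linear map, so the graded maps $\wt{\mathbb{F}}\rt\wt{\mathbb{F}}(-2)$ form a $Q$-module on which the scalars $f_j,g_j,\alpha_{ij}\in Q$ act centrally; since $Q$ is commutative, the products above are honest $Q$-bilinear pairings and both associativity and the transpose rule hold verbatim, so the equality holds on the nose (not merely up to homotopy). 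Thus the computation itself is purely formal; the only substance is in fixing the indexing correctly, which is precisely the correction to \cite{Eis} being flagged here.
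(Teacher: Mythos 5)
Your proposal is correct and follows essentially the same route as the paper: rewrite the sum as $[\Bg]^{tr}\cdot[\mathbf{\wt{t}^\prime}]$, substitute $[\mathbf{\wt{t}^\prime}]=\alpha^{tr}\cdot[\mathbf{\wt{t}}]$, and use associativity, the transpose rule, and $[\Bf]=\alpha\cdot[\Bg]$ to reduce to the hypothesis $[\Bf]^{tr}\cdot[\mathbf{\wt{t}}]=\wt{\partial^2}$. Your added remark justifying the bilinearity of the pairing (since the $\wt{t_j}$ are $Q$-linear maps and $Q$ is commutative) is a harmless elaboration of what the paper leaves implicit.
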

\begin{proof}
\begin{align*}
\sum_{j=1}^{c} g_j\wt{t_j}^\prime &= [\Bg]^{tr}\cdot [\mathbf{\wt{t}^\prime}]  \\
&= [\Bg]^{tr}\cdot \left(  \alpha^{tr}\cdot [\mathbf{\wt{t}}] \right) \\
                                &= \left( [\Bg]^{tr} \cdot \alpha^{tr} \right) \cdot [\mathbf{\wt{t}}] \\
                                 &= \left( \alpha \cdot [\Bg] \right)^{tr} \cdot [\mathbf{\wt{t}}] \\
                                &= [\Bf]^{tr}\cdot  [\mathbf{\wt{t}}]   \\
                                       &= \wt{\partial^2}.
\end{align*}
\end{proof}
Define, for $j = 1,\ldots,c$ the map $t_j^\prime = \wt{t_j}^\prime \otimes A$. Then $t_1^\prime,\cdots,t_c^\prime$ are the Eisenbud operators associated to $\Bg$.
Furthermore we have
\[
[\mathbf{t^\prime}] = \alpha^{tr}\cdot [\mathbf{t}].
\]

\s Let $R = A[t_1,\ldots,t_c]$ be a polynomial ring over $A$ with variables $t_1,\ldots,t_c$ of degree $2$. Let $M, N$ be  finitely generated $A$-modules. By considering a free resolution $\mathbb{F}$ of $M$ we get well defined maps
\[
t_j \colon \Ext^{n}_{A}(M,N) \rightarrow \Ext^{n+2}_{R}(M,N) \quad \ \text{for} \ 1 \leq j \leq c  \ \text{and all} \  n,
\]
which turn $\Ext_A^*(M,N) = \bigoplus_{i \geq 0} \Ext^i_A(M,N)$ into a module over $R$. Furthermore these structure depend only on $\Bf$, are natural in both module arguments and commute with the connecting maps induced by short exact sequences.

\s  Gulliksen, \cite[3.1]{Gull},  proved that if $\projdim_Q M$ is finite then
$\Ext_A^*(M,N) $ is a finitely generated $R$-module. For $N = k$, the residue field of $A$, Avramov in \cite[3.10]{LLAV} proved a converse; i.e., if
$\Ext_A^*(M,k)$ is a finitely generated $R$-module then $\projdim_Q M$ is finite. For a more general result, see \cite[4.2]{AGP}.

\s Since $\m \subseteq \ann \Ext^{i}_A(M,k)$ for all $i \geq 0$ we get that $\Ext^*_A(M,k)$ is a module over $S = R/\m R = k[t_1,\ldots,t_c]$.
If $\projdim_Q M$ is finite then $\Ext^*_A(M,k)$ is a finitely generated $S$-module of Krull dimension $\cx M$.

\s \label{inverse}
Going mod $\m$ we get that the ring $S$ is invariant of a minimal generating set of $I = (\Bf)$.
Conversely if $\xi_1,\ldots,\xi_c \in S_2$ be such that $S = k[\xi_1,\ldots,\xi_c]$ then there exist's  a regular sequence
$\Bg = g_1,\ldots,g_c$ such that
\begin{enumerate}
\item
$(\Bg) = (\Bf)$
\item
if $t_j^\prime$ are the Eisenbud operators associated to $g_j$ for $j = 1,\ldots,c$ then the action of $t_j^\prime$ on $\Ext^*_A(M,k)$ is same as that
of $\xi_j$ for $j = 1,\ldots,c$.
\end{enumerate}
This can be seen as follows.
Let
\[
\xi_i = \ov{\beta_{i1}}t_1 + \cdots + \ov{\beta_{ic}}t_c \quad \text{for} \ i = 1,\ldots,c.
\]
Let $\beta = (\beta_{ij}) \in M_c(R)$. Note that $\beta$ is an invertible matrix since $\ov{\beta} = (\ov{\beta_{ij}})$ is an invertible matrix in $M_n(k)$. Set $\alpha = \beta^{tr}$. Define $\Bg = g_1,\ldots,g_c$ by
\[
[\Bg] = \alpha^{-1}\cdot[\Bf]
\]
Clearly $\Bg = g_1,\ldots,g_c$ is a regular sequence and $(\Bf) = (\Bg)$.
Notice $[\Bf] = \alpha \cdot [\Bg]$. So by \ref{Basis-change} the cohomological operators $t_1^{\prime},\ldots, t_c^{\prime}$ associated to $\Bg$
is given by the formula
\[
[\mathbf{t^\prime}] = \alpha^{tr}\cdot [\mathbf{t}] = \beta \cdot [\mathbf{t}].
\]
It follows that the action of $t_j^\prime$ on $\Ext^*_A(M,k)$ is same as that
of $\xi_j$ for $j = 1,\ldots,c$.

The following result is easy to prove.

\begin{lemma}\label{choice}
Let $K$ be an infinite field and let $S = K[X_1,\ldots,X_n]$ be a polynomial ring with $\deg X_i = 2$ for each $i = 1,\ldots,n$. Let
$E = \bigoplus_{i \in \Z}E_i$ be a finitely generated graded $S$-module of Krull dimension one.
Then there exists $\xi = \beta_1 X_1 + \cdots + \beta_n X_n$ where $\alpha_i \in K$ such that
\begin{enumerate}
\item
$\xi$ is a parameter for $E$.
\item
$\beta_1, \beta_2, \ldots, \beta_n$ are all non-zero
\end{enumerate}
\qed
\end{lemma}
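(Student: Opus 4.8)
The plan is to translate both requirements into the single statement that a certain element of the $n$-dimensional $K$-vector space $V = \langle X_1,\ldots,X_n\rangle_K = S_2$ should avoid finitely many proper subspaces, and then to invoke the standard fact that a vector space over an infinite field is not a finite union of proper subspaces.

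First I would recall the prime-avoidance description of a parameter. Since $E$ is a finitely generated graded $S$-module with $\dim E = 1$, a homogeneous element $\xi$ of positive degree is a parameter for $E$ (that is, $\ell(E/\xi E) < \infty$, equivalently $\dim(E/\xi E) = 0$) \ff \ $\xi$ lies outside every homogeneous prime $\mathfrak{p}$ in the support of $E$ with $\dim S/\mathfrak{p} = 1$. As $\dim E = 1$, these are exactly the one-dimensional minimal primes of $E$; there are finitely many of them, say $\mathfrak{p}_1,\ldots,\mathfrak{p}_r$.

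Next I would encode everything inside $V = S_2$. For each $j$ the set $V \cap \mathfrak{p}_j$ is a $K$-subspace of $V$, and it is \emph{proper}: if $S_2 = V \subseteq \mathfrak{p}_j$, then, since the irrelevant ideal $S_+$ is generated by $S_2$ (every positive-degree monomial is a product of the $X_i$), we would get $\mathfrak{p}_j = S_+$, forcing $\dim S/\mathfrak{p}_j = 0$, contrary to $\dim S/\mathfrak{p}_j = 1$. Likewise, for each $i$ the locus $H_i = \{\,\sum_\ell \beta_\ell X_\ell : \beta_i = 0\,\}$ is a proper (coordinate) hyperplane of $V$. Condition (1) is the requirement that $\xi$ avoid $\bigcup_{j}(V \cap \mathfrak{p}_j)$, and condition (2) that it avoid $\bigcup_i H_i$.

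Finally, because $K$ is infinite, $V$ cannot be written as a union of finitely many proper $K$-subspaces; hence there exists
\[
\xi \in V \setminus \left( \bigcup_{j=1}^{r} (V \cap \mathfrak{p}_j) \ \cup \ \bigcup_{i=1}^{n} H_i \right),
\]
and any such $\xi$ satisfies both (1) and (2). I do not expect a genuine obstacle here; the only two points needing care are the verification that each $V \cap \mathfrak{p}_j$ is a proper subspace (this is where the hypothesis $\dim S/\mathfrak{p}_j = 1$, rather than $0$, is used) and the covering fact for vector spaces, which is precisely the place where the infiniteness of $K$ is essential.
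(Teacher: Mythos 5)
Your proof is correct, and it is exactly the standard argument the paper has in mind: the paper states this lemma with no proof at all (only the remark that it is ``easy to prove''), so your write-up simply supplies the intended prime-avoidance argument. The two points you flag as needing care are indeed the only ones --- each $\mathfrak{p}_j \cap S_2$ is a proper subspace because $S_2$ generates $S_+$ while $\dim S/\mathfrak{p}_j = 1$, and the infinitude of $K$ is what prevents $S_2$ from being a finite union of proper subspaces --- and both are handled correctly.
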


\section{Proof of Theorem \ref{cd2}}
In this section we assume that $(A,\m)$ is a complete local ring with infinite residue field;  see \ref{AtoA'}. Recall that a local ring
$(Q,\n)$ is a (codimension c) \emph{deformation} of $A$, if there exists a surjective map $\rho \colon Q \rightarrow A$ with $\ker \rho$ generated by a $Q$-regular sequence of length $c$. The deformation is called \emph{embedded} if $\ker \rho \subseteq \n^2$. Given a deformation of $A$, we view every $A$-module as a $Q$-module, via $\rho$.

\begin{definition}
For an $A$-module $M$, the \emph{virtual projective dimension}, $\vpd_A M$,  is (the non-negative integer or $\infty$)
$$ \vpd_A M = \min \{ \projdim_Q M \mid Q \ \text{is a deformation of $A$} \}.                $$
\end{definition}
In Lemma 3.4(3) of \cite{LLAV} it is proved that
$$ \vpd_A M = \min \{ \projdim_Q M \mid Q \ \text{is an embedded  deformation of $A$} \}.                $$
Furthermore in Theorem 3.5 of \cite{LLAV} Avramov proves that
\[
\vpd_A M = \depth A - \depth M + \cx_A M.
\]
\begin{proof}[Proof of Theorem \ref{cd2}]
We may without  loss of generality assume that $A$ is complete and has an infinite residue field.
Note that
\[
\vpd_A M = \depth A - \depth M + \cx_A M  = \cx_A M \leq 1
\]
The first case we consider is when $\vpd_A M = 0$. In  this case $M$ has finite projective dimension over $A$. As $M$ is also maximal \CM, we get that $M$ is free and so by our earlier result the Hilbert function of $M$ is non-decreasing.

The next case is when  $\vpd_A M = 1$. Note that
\[
\vpd_A M =  \min \big\{ \projdim_Q M \mid Q  \  \text{  is an embedded deformation of } \  A     \big \}
\]
Thus there exists an embedded deformation $(Q_0, \n_0)$ of $A$ such that
\[
\projdim_{Q_0} M = 1.
\]
By the Auslander-Buchsbaum formula we get
\[
\depth_{Q_0} M + \projdim_{Q_0} M = \depth Q_0.
\]
Notice that $Q_0$ is \CM. Furthermore
\[
\depth_{Q_0} M  = \depth_A M = \dim M = \dim A = d.
\]
Thus we have $\dim Q_0 = d+1$. Furthermore as $Q_0$ is an embedded deformation of $A$ we get that
\[
\embdim Q_0 = \embdim A = d + 2
\]
It follows that $Q_0$ is a local hypersurface ring of dimension $d+1 \geq 2$.
Notice that $G(Q_0)$ is \CM \ of dimension  $\geq 2$. Furthermore $\projdim_{Q_0} M = 1$. We now use a result from part 1 of the paper, see
 Theorem \ref{chief}, to get that the Hilbert function of $M$ is non-decreasing.
\end{proof}
For the reader's convenient we state Theorem 5 of part 1 of this paper.
\begin{theorem}\label{chief}
Let $(Q,\n)$ be a Noetherian local ring and let $M$ be a finitely generated $Q$-module with $\projdim_Q M \leq 1$. If $\depth G(Q) \geq 2$ then
the Hilbert function of $M$ is non-decreasing.
\end{theorem}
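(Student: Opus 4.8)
The plan is to first invoke the base changes of \ref{AtoA'} to assume that $Q$ is complete with infinite residue field, and then to dispose of the trivial case. If $\projdim_Q M = 0$ then $M$ is free, say $M \cong Q^a$ with $a = \mu(M)$, so $G(M) = G(Q)^a$ and $H(M,n) = a\,H(Q,n)$; since $\depth G(Q) \geq 2 > 0$ the Hilbert function of $Q$ is non-decreasing (Proposition 3.2 of \cite{Pu2}), and hence so is that of $M$. So I may assume $\projdim_Q M = 1$ and fix a minimal presentation $0 \to Q^b \xrightarrow{\phi} Q^a \to M \to 0$; set $K = \image \phi$.

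Next I would pass to associated graded objects. Filtering $M = Q^a/K$ by the powers of $\n$ and passing to initial forms produces an exact sequence of finitely generated graded $G(Q)$-modules
\[
0 \to K^* \to G(Q)^a \to G(M) \to 0 ,
\]
where $K^*$ denotes the initial submodule of $K$. The module $G(Q)^a$ is free, so its depth equals $\depth G(Q) \geq 2$; writing $\M$ for the graded maximal ideal of $G(Q)$ and taking local cohomology, the vanishing $H^0_{\M}(G(Q)^a) = H^1_{\M}(G(Q)^a) = 0$ forces, through the long exact sequence, both $\depth K^* \geq 1$ and a graded isomorphism $H^0_{\M}(G(M)) \cong H^1_{\M}(K^*)$. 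The same vanishing applied to the saturation $K^{**}$ of $K^*$ in $G(Q)^a$ (the submodule with $K^{**}/K^* = H^0_{\M}(G(M))$) yields $\depth K^{**} \geq 2$.

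The third step is to isolate the Cohen-Macaulay defect of $G(M)$. Put $h(n) = \dim_k H^0_{\M}(G(M))_n$ and $E' = G(M)/H^0_{\M}(G(M))$, so that $H(M,n) = \dim_k E'_n + h(n)$ and $h$ is the Hilbert function of a module of finite length. Because $\depth E' \geq 1$ and $k$ is infinite, a general linear form $z \in G(Q)_1$ is a nonzerodivisor on $E'$, so multiplication by $z$ embeds $E'_n$ into $E'_{n+1}$; thus $E'$ has a non-decreasing Hilbert function. Consequently
\[
H(M,n+1) - H(M,n) = \left( \dim_k E'_{n+1} - \dim_k E'_n \right) + \left( h(n+1) - h(n) \right),
\]
where the first summand is already $\geq 0$, so everything comes down to the finite-length term $h$.

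Controlling $h$, that is $H^0_{\M}(G(M))$, is the main obstacle. Here $G(M)$ genuinely may have depth zero — as it does for the maximal \CM \ module over $k[[x,y]]/(y^3)$ viewed over $Q = k[[x,y]]$ — so $h$ is not identically zero and must eventually drop; the task is to show that at each degree where $h$ decreases the increase of $E'$ compensates, i.e. $\dim_k E'_{n+1} - \dim_k E'_n \geq h(n) - h(n+1)$. This is exactly where $\depth G(Q) \geq 2$, rather than merely $\geq 1$, is indispensable: it is what produced $\depth K^* \geq 1$ and $\depth K^{**} \geq 2$, and through the isomorphism $H^0_{\M}(G(M)) \cong H^1_{\M}(K^*)$ together with the embedding of the depth-$\geq 2$ module $K^{**}$ in the free module $G(Q)^a$ it should pin down both the range of degrees in which $h$ is supported and the rate of growth of $E'$. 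I expect the cleanest way to extract the compensation inequality is to transport the argument to the Rees module $L(M) = \bigoplus_{n \geq 0} M/\n^{n+1}M$ and its exact sequence $0 \to G(M) \to L(M) \to L(M)(-1) \to 0$, converting the depth bound on $G(Q)$ into a local-cohomology bound on $L(M)$ and reading the desired inequality off the resulting long exact sequence. Carrying out this final estimate is the technical heart of the proof.
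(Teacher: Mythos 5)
Your proposal stops exactly where the theorem begins. The reductions you carry out are all correct: the graded exact sequence $0 \to K^* \to G(Q)^a \to G(M) \to 0$, the conclusions $\depth K^* \geq 1$, $H^0_{\M}(G(M)) \cong H^1_{\M}(K^*)$ and $\depth K^{**} \geq 2$ from $\depth G(Q) \geq 2$, and the decomposition $H(M,n) = \dim_k E'_n + h(n)$ with $E'$ having a non-decreasing Hilbert function. But notice that none of these steps uses the hypothesis $\projdim_Q M \leq 1$ in any essential way: each holds verbatim for an arbitrary finitely generated $Q$-module $M$ (any submodule $K$ of a free cover yields the same four conclusions), and for arbitrary $M$ the theorem is false --- take $M$ to be a one-dimensional \CM \ local ring with non-monotone Hilbert function, presented as a quotient of a regular local ring $Q$. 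So the inequality you defer, $\dim_k E'_{n+1} - \dim_k E'_n \geq h(n) - h(n+1)$, is not a final estimate to be carried out; it is the entire content of the theorem, and it is the only place where the freeness of the first syzygy $K \cong Q^b$ can enter. Your own bookkeeping makes this visible: since $h(n) = \dim_k K^{**}_n - \dim_k K^*_n$ and $\dim_k E'_n = a\,H(Q,n) - \dim_k K^{**}_n$, the inequality you need is equivalent to $a\left(H(Q,n+1)-H(Q,n)\right) \geq \dim_k K^*_{n+1} - \dim_k K^*_n$, which is just a restatement of $H(M,n+1) \geq H(M,n)$. The proposed route through $L(M) = \bigoplus_{n \geq 0} M/\n^{n+1}M$ and $0 \to G(M) \to L(M) \to L(M)(-1) \to 0$ is indeed the right sort of machinery, but ``reading the inequality off the long exact sequence'' would require first establishing vanishing and finiteness properties of $H^1_{R_+}(L(M))$ and tying them to $\projdim_Q M \leq 1$; none of that is done here.

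A separate remark on the comparison you were asked to make: this paper does not prove Theorem \ref{chief} at all --- it is quoted from Part I (Theorem 5 of \cite{Pu2}) for the reader's convenience, and the $L(M)$ technology you allude to lives in that earlier paper. So there is no internal proof to measure your argument against; judged on its own, the proposal is an accurate setup followed by an acknowledged omission of the main step, and cannot be accepted as a proof.
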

In \cite{VallaB} there is an example of a complete intersection $A$  of codimension 2 with $\depth G(A) = 0$. For the convenience of the
reader we give it here.
\begin{example} \label{ex2}
Let $K$ be a field and let $A = K[[t^6, t^7, t^{15}]]$. It can be verified that
\[
A \cong \frac{K[[X,Y,Z]]}{(Y^3-XZ, X^5 - Z^2)}
\]
and that
\[
G(A) \cong \frac{K[X,Y,Z]}{(XZ, Y^6, Y^3Z,Z^2)}
\]
Note that $ZY^2$ annihilates $(X,Y,Z)$. So $\depth G(A) = 0$.
\end{example}

\section{Proof of Theorem \ref{strict}}
In this section we give a proof of Theorem 1. We also give an example of a maximal \CM \ module over a strict complete intersection such that the Hilbert function of $M$ is not monotone increasing.
\begin{theorem}
Let $(A,\m)$  be a strict complete intersection ring of dimension $d \geq 1$. Let $M$ be a maximal \CM \ $A$-module with $\cx_A M \leq 1$. Then the
Hilbert function of $M$ is non-decreasing.
\end{theorem}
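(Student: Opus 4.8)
The plan is to mimic the proof of Theorem \ref{cd2}: I will produce a codimension-one embedded deformation of $A$ over which $M$ has projective dimension one and whose associated graded ring has depth at least two, and then quote Theorem \ref{chief}. Compared with the codimension-two case, the genuinely new work is guaranteeing the depth condition on the associated graded of the deformation, and this is exactly where the strict complete intersection hypothesis enters.

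First I would reduce, via the base changes in \ref{AtoA'}, to the case where $A$ is complete with infinite residue field, and write $A = Q/(f_1,\ldots,f_c)$ with $(Q,\n)$ regular local and $f_1,\ldots,f_c$ a $Q$-regular sequence in $\n^2$. Since $M$ is maximal \CM, the formula $\vpd_A M = \depth A - \depth M + \cx_A M$ gives $\vpd_A M = \cx_A M \leq 1$. If $\cx_A M = 0$ then $M$ has finite projective dimension over $A$ and, being maximal \CM, is free; as $A$ is a strict complete intersection, $G(A)$ is \CM\ of dimension $d \geq 1$, so $\depth G(A) = d > 0$ and the Hilbert function of $A$, hence of $M$, is non-decreasing. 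Thus I may assume $\cx_A M = 1$.

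In this case $\projdim_Q M < \infty$ because $Q$ is regular, so $\Ext^*_A(M,k)$ is a finitely generated graded module over $S = k[t_1,\ldots,t_c]$ of Krull dimension $\cx_A M = 1$. By Lemma \ref{choice} I choose a parameter $\xi = \beta_1 t_1 + \cdots + \beta_c t_c$ for this module with every $\beta_i \neq 0$; being a parameter for a one-dimensional module, $\xi$ makes $\Ext^*_A(M,k)$ a finitely generated $k[\xi]$-module. Completing $\xi$ to a system of generators of $S$ and invoking the change-of-operators construction \ref{inverse}, I obtain a regular sequence $\Bg = g_1,\ldots,g_c$ with $(\Bg)=(\Bf)$ whose first Eisenbud operator $t_1'$ acts on $\Ext^*_A(M,k)$ as $\xi$. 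Set $Q_1 = Q/(g_2,\ldots,g_c)$, an embedded deformation of $A$ with $A = Q_1/(g_1)$ and $\dim Q_1 = d+1$. Applying Avramov's converse to Gulliksen's theorem to the presentation $A = Q_1/(g_1)$, whose single cohomology operator is induced by $t_1' = \xi$, the finiteness of $\Ext^*_A(M,k)$ over $k[\xi]$ yields $\projdim_{Q_1} M < \infty$; since $Q_1$ is \CM\ of dimension $d+1$ and $\depth_{Q_1} M = \depth_A M = d$, the Auslander-Buchsbaum formula forces $\projdim_{Q_1} M = 1$.

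It remains to establish $\depth G(Q_1) \geq 2$, after which Theorem \ref{chief} finishes the proof, and I expect this to be the main obstacle. Writing $G(Q)$ for the polynomial ring associated to $Q$ and $g_i^*$ for the initial form of $g_i$, the point is to arrange the choices so that $g_1^*,\ldots,g_c^*$ remain a $G(Q)$-regular sequence and form a standard basis; then $g_2^*,\ldots,g_c^*$ is again a regular sequence generating the initial ideal of $(g_2,\ldots,g_c)$, so $G(Q_1) = G(Q)/(g_2^*,\ldots,g_c^*)$ is a complete intersection of dimension $d+1 \geq 2$ and $\depth G(Q_1) = d+1 \geq 2$. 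Unlike the codimension-two case, where $Q_1$ is a hypersurface and the initial ideal of the principal ideal $(g_1)$ is automatically generated by a single initial form, here the deformation has codimension $c-1$ and cancellation among initial forms must be excluded. The delicate point is that \ref{inverse} prescribes the change of generating set through the matrix governing the Eisenbud operators, whereas the strict complete intersection structure is a condition on initial forms; reconciling the two, especially when the $f_i$ have different orders so that a combination $\xi$ with all $\beta_i \neq 0$ cannot be the initial form of a single generator, is the technical heart of the argument, and it is precisely to control this that the all-nonzero-coefficient choice in Lemma \ref{choice} is made.
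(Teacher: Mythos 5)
Your outline follows the paper's architecture exactly --- reduce to the complete case, dispose of $\cx_A M = 0$, use Lemma \ref{choice} and \ref{inverse} to manufacture a new generating set $\Bg$ of $(\Bf)$ so that $A = Q_1/(g_1)$ with $Q_1 = Q/(g_2,\ldots,g_c)$ and $\projdim_{Q_1} M = 1$, then quote Theorem \ref{chief} --- and you correctly locate the one place where the strict complete intersection hypothesis must do work. But that is precisely the step you do not carry out: you announce that one must ``arrange the choices so that $g_1^*,\ldots,g_c^*$ remain a $G(Q)$-regular sequence,'' call it the technical heart, and stop. As written, the claim $\depth G(Q_1) \geq 2$ is unproved, so the proof is incomplete.

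The paper closes this gap with a concrete and rather short computation. First order the generators so that $\ord(f_1) \geq \ord(f_2) \geq \cdots \geq \ord(f_c)$. Then take $\xi_1 = \ov{\beta_1}t_1 + \cdots + \ov{\beta_c}t_c$ from Lemma \ref{choice} but keep $\xi_j = t_j$ for $j \geq 2$; the change-of-basis matrix $\beta$ is then elementary, and inverting it gives explicitly $g_1 = \frac{1}{\beta_1}f_1$ and $g_j = f_j - \frac{\beta_j}{\beta_1}f_1$ for $j \geq 2$. Because $f_1$ has the \emph{largest} order, adding a multiple of $f_1$ to $f_j$ cannot lower the order of $f_j$: one gets $g_j^* = f_j^*$ when $\ord(f_1) > \ord(f_j)$, and $g_j^* = f_j^* - \frac{\beta_j}{\beta_1}f_1^*$ (nonzero because $f_1^*,\ldots,f_c^*$ is a $G(Q)$-regular sequence) when the orders are equal. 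Hence $(\Bg^*) = (\Bf^*)$, $\Bg^*$ is again a $G(Q)$-regular sequence, and $G(Q_1) = G(Q)/(g_2^*,\ldots,g_c^*)$ is Cohen--Macaulay of dimension $d+1 \geq 2$. Note also that your worry about $\xi$ ``not being the initial form of a single generator'' is slightly off target: $\xi$ lives in the operator ring $S$, not in $G(Q)$, and the only thing that matters is the effect of the induced change of generators on initial forms --- which the ordering trick, together with the choice $\xi_j = t_j$ for $j \geq 2$, controls completely. Without some such explicit argument your proposal does not establish the theorem.
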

\begin{proof}
By 1.2 we may assume that $A$ is complete and has an infinite residue field. So
 $A = Q/(\Bf)$ where $(Q,\n)$ is a regular local ring with infinite residue field, $\Bf = f_1,\ldots,f_c \in \n^2$
is a $Q$-regular sequence. Furthermore we may assume $\Bf^* = f_1^*,\ldots,f_c^*$ is a $G(Q)$-regular sequence and $G(A) = G(Q)/(\Bf^*)$.

For $x \in \n^i \setminus \n^{i+1} $ we set $\ord(x) = i$.
Without loss of any generality we may further assume that
\[
\ord(f_1) \geq \ord(f_2) \geq \cdots \geq \ord(f_c).
\]

If $\cx_A M = 0$ then $M$ is free. It follows that the Hilbert function of $M$ is non-decreasing. So we now assume that $\cx_A M = 1$.
As discussed in 1.10,  $\Ext^*_A(M,k)$ is a finitely generated graded $S = k[t_1,\ldots,t_c]$ module of Krull dimension one. By Lemma \ref{choice}
there exists $\xi_1 = \ov{\beta_1}t_1 + \cdots + \ov{\beta_c}t_c \in S_2$ such that
\begin{enumerate}
\item
$\xi_1$ is a parameter for $E$.
\item
$\ov{\beta_1}, \ov{\beta_2}, \ldots, \ov{\beta_c}$ are all non-zero.
\end{enumerate}
Set $\xi_j = t_j$ for $j = 2,\ldots,c$. Then $S = k[\xi_1, \ldots, \xi_c]$.

Set $\beta = (\beta_{ij})$ where
\[
\beta_{ij}  =
\begin{cases}
\beta_j, &\text{if $i = 1$;} \\
1, &\text{if $i = j$ and $i \geq 2$;} \\
0, &\text{otherwise.}
\end{cases}
\]
Clearly $\beta$ is an invertible matrix in $M_n(A)$. By \ref{inverse} there exists a regular sequence $\Bg = g_1, \ldots,g_c$ such that
\begin{enumerate}
\item
$(\Bg) = (\Bf)$
\item
if $t_j^\prime$ are the Eisenbud operators associated to $g_j$ for $j = 1,\ldots,c$ then the action of $t_j^\prime$ on $\Ext^*_A(M,k)$ is same as that
of $\xi_j$ for $j = 1,\ldots,c$.
\end{enumerate}
By \ref{inverse}
\[
[\Bg] = (\beta^{tr})^{-1}\cdot [\Bf] = (\beta^{-1})^{tr} \cdot [\Bf].
\]
It is easy to compute the inverse of $\beta$. So we obtain
\begin{align*}
g_1 &= \frac{1}{\beta_1}f_1 \\
g_j &= \frac{-\beta_j}{\beta_1} f_1 + f_j \ \text{for} \ j = 2,\ldots,c.
\end{align*}
Recall that $\ord(f_1) \geq \ord(f_j)$ for $j = 2, \ldots,c$. Notice that if $\ord(f_1) = \ord(f_j)$ then
\[
\frac{-\beta_j}{\beta_1} f_1^* + f_j^* \neq 0,
\]
since $f_1^*, \ldots, f_c^*$ is a $G(Q)$-regular sequence. Thus we have
\[
g_1^* = \frac{1}{\beta_1}f_1^*  \quad \text{and}
\]
for $2 \leq j \leq c$ we have
\[
g_j^* =
\begin{cases}
f_j^*, &\text{if $\ord(f_1) > \ord(f_j)$;} \\
\frac{-\beta_j}{\beta_1} f_1^* + f_j^*, &\text{if $\ord(f_1) = \ord (f_j)$.}
\end{cases}
\]
Notice that $\Bg^* = g_1^*,\ldots,g_c^*$ is a regular sequence in $G(Q)$ and $(\Bg^*) = (\Bf^*)$. The subring
$k[\xi_1]$ of $S$ can be identified with the ring $S^\prime$ of cohomology operators of a presentation $A = P/(g_1)$ where
$P = Q/(g_2,\ldots,g_c)$. Since $\Ext_A^*(M,k)$ is a finite module over $S^\prime$ we get that $\projdim_P M$ is finite.

Notice that
\begin{enumerate}
\item
$\dim P = \dim A + 1 \geq 2$.
\item
$\projdim_P M = 1$.
\item
$P$ is a strict complete intersection, since $G(P) = G(Q)/(g_2^*,\ldots,g_c^*)$.
\item
$G(P)$ is \CM \ of dimension $\geq 2$.
\end{enumerate}
It follows from Theorem \ref{chief} that the Hilbert function of $M$ is non-decreasing.
\end{proof}
We give an example of an MCM module over a strict Gorenstein ring having non-monotone Hilbert function.
\begin{example}\label{ex1}
Let $(B,\n)$ be a \CM \ local ring of positive dimension having non-monotone Hilbert function. By \cite[2.5]{PuGor} there exists a strict complete intersection $A$ with $\dim A = \dim B$ such that $B$ is a quotient of $A$. Clearly $B$ is a MCM $A$-module.
\end{example}



\providecommand{\bysame}{\leavevmode\hbox to3em{\hrulefill}\thinspace}
\providecommand{\MR}{\relax\ifhmode\unskip\space\fi MR }
\providecommand{\MRhref}[2]{%
  \href{http://www.ams.org/mathscinet-getitem?mr=#1}{#2}
}
\providecommand{\href}[2]{#2}

\end{document}